\newtheorem{dummy}{realdumb}
\newtheorem{theorem}[dummy]{Theorem}
\theoremstyle{definition}               
\newtheorem*{definition}{Definition}
\newtheorem*{remark}{Remark}
\newtheorem*{remarks}{Remarks}
\DeclareMathOperator{\ind}{Ind}
\newcommand{\bbZ}{\mathbb Z}
\newcommand{\bbR}{\mathbb R}
\begin{document}

\title{Local Indices of a Vector Field at an Isolated Zero on the Boundary}

\author[H. Kamae]{Hiroaki Kamae}
\address{Department of Applied Science, Graduate School of Science, Okayama University of Science, Okayama, Okayama 700-0005, Japan (from April 2008)}

\author[M. Yamasaki]{Masayuki Yamasaki}
\address{Department of Applied Science, Faculty of Science, Okayama University of Science, Okayama, Okayama 700-0005, Japan}
\email{yamasaki@surgery.matrix.jp}

\begin{abstract}
We define two types of local indices of a vector field at an isolated
zero on the boundary, and 
prove  Poincar\'e-Hopf-type index theorems for certain
vector fields on a compact smooth manifold which have only isolated zeros.
\end{abstract}

\thanks{The second author was partially supported by Grant-in-Aid for
Scientific Research.}

\subjclass[2000]{Primary 57R25; Secondary 55M25}
\keywords{vector field, index, Euler characteristic}

\maketitle

\section{Introduction}
The famous Poincar\'e-Hopf theorem states that the index $\ind(V)$ 
of a continuous tangent vector field $V$ on a compact smooth manifold $X$
is equal to the Euler charactersitic $\chi(X)$ of $X$, if $V$ has only isolated zeros
away from the boundary and $V$ points outward on the boundary of $X$.
If you assume that the vectors on some of the boundary components point
inward and point outward on the other components, then the formula will look like:
\[
\ind(V) = \chi(X) - \chi(\partial_-X)~,
\]
where $\partial_-X$ denotes the union of the boundary components on which
the vectors point inward.  This can be observed by looking at the Morse function
of the pair $(X,\partial_-X)$.
In \cite{Morse}, M. Morse 
relaxed the requirement on the boundary behavior and obtained a formula
\[
\ind(V)+\ind(\partial_-V)=\chi(X)~.
\]
Actually the requirement that the singularities are isolated are also
relaxed.  
This formula has been rediscovered and extended by several authors
\cite{Pugh} \cite{BG} \cite{GS}.
In this paper we consider only vector fields whose zeros are isolated.
But we allow zeros on the boundary.

Let $X$ be an $n$-dimensional compact smooth manifold with boundary $\partial X$, 
and fix a Riemanian metric on $X$.  We assume $n\ge 1$.
For a continuous tangent vector field $V$ on $X$ and a point $p$ of its
boundary, we define the vector $\partial V(p)$ to be the orthogonal projection
of $V(p)$ to the tangent space of $\partial X$ at $p$. The tangent vector field
$\partial V$ on $\partial X$ is called the {\sl boundary} of $V$.
$\partial^\perp V$ denotes the normal vector field on $\partial X$ defined by
$\partial^\perp V(p)=V(p)-\partial V(p)$.
A zero $p$ of $\partial V$ is said to be of {\sl type $+$}
if $V(p)$ is an outward vector.
It is of {\sl type $-$} if $V(p)$ is an inward vector.
It is of {\sl type $0$} if it is also a zero of $V$.

Suppose $p$ is an isolated zero of $V$.  If $p$ is in the interior of $X$, 
then the {\it local index} $\ind(V,p)$ of $V$ at $p$ is defined 
as is well known; it is an integer.
When $p$ is on the boundary and is an isolated zero of $\partial V$, 
we will define the {\it normal local index} $\ind_\nu(V,p)$ of $V$ at $p$
which is either an integer or a half-integer in the next section;
when $p$ is  an isolated zero of $\partial^\perp V$, 
we will define the {\it tangential local index} $\ind_\tau(V,p)$ of $V$ at $p$.
This may be a half-integer, too, when $n\le2$.
These two local indices are not necessarily the same
when they are both defined.

When the zeros of $V$ and $\partial V$ are all isolated, 
we  define the {\it normal index} $\ind_\nu (V)$ of $V$ to be the sum
of the local indices at the zeros in the interior and the normal
local indices at the zeros on the boundary.
The sum of the local indices of $\partial V$ at the zeros of type $+$ ({\it resp.}
$-$, $0$) is denoted $\ind(\partial_+V)$ ({\it resp.} $\ind(\partial_-V)$,
$\ind(\partial_0V)$).
\begin{theorem}
Suppose $X$ is an $n$-dimensional compact smooth manifold
and $V$ is a continuous tangent vector field on $X$.
If $V$ and $\partial V$ have only isolated zeros, then the following equality holds:
\[
\ind_\nu(V)+\frac{1}{2}\ind(\partial_0 V) + \ind(\partial_- V) =\chi(X)~.
\]
\end{theorem}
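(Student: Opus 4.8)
My plan is to deduce the identity from the classical Poincar\'e--Hopf theorem (equivalently, from Morse's formula $\ind(V)+\ind(\partial_-V)=\chi(X)$ cited above) by pushing every boundary zero of $\partial V$ off the boundary in the outward normal direction, reducing to a field whose only boundary zeros are of type $+$. The content is entirely local and global bookkeeping: the single nontrivial ingredient is a \emph{local invariance lemma} asserting that pushing a boundary zero slightly inward or outward along the normal direction leaves the left-hand side unchanged. Granting this, I would push all type-$0$ and type-$-$ zeros outward; the resulting field $V'$ has $\ind(\partial_0V')=\ind(\partial_-V')=0$, and it remains to see that then $\ind_\nu(V')=\ind(V'_{\mathrm{int}})=\chi(X)$.

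The heart is the invariance lemma, which is also where the coefficient $\tfrac12$ is forced. In boundary-adapted coordinates $(x,t)\in\bbR^{n-1}\times[0,\infty)$ near a boundary zero $p$ I write $V=(A,B)$, with $A$ tangential and $B$ the inward-normal component, so $A(p)=0$ and $\ind(\partial V,p)=\deg\bigl(A(\cdot,0)/|A(\cdot,0)|\bigr)$ on a small $S^{n-2}$. Pushing outward and inward, $V^{\pm}=(A,\,B\mp\varepsilon)$, removes the zero from $\partial X$ and turns $p$ into a type-$+$ (resp.\ type-$-$) zero of the boundary field, while producing interior indices $I_{\mathrm{out}}$ and $I_{\mathrm{in}}$ near $p$. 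The key computation I expect is
\[
I_{\mathrm{out}}-I_{\mathrm{in}}=\ind(\partial V,p),
\]
which I have checked on the standard planar models. Since the normal local index at a type-$0$ zero is defined as the average $\tfrac12(I_{\mathrm{out}}+I_{\mathrm{in}})$ (hence its half-integrality), the three quantities
\[
\ind_\nu(V,p)+\tfrac12\ind(\partial V,p),\qquad I_{\mathrm{out}},\qquad I_{\mathrm{in}}+\ind(\partial V,p)
\]
all coincide; this is exactly the statement that the combination $\ind_\nu+\tfrac12\ind(\partial_0V)+\ind(\partial_-V)$ is unchanged whether we leave $p$ as a type-$0$ zero, push it to type $+$, or push it to type $-$.

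With the lemma in hand I would finish as follows. Pushing every type-$0$ and type-$-$ zero outward yields $V'$ whose boundary zeros are all of type $+$; a purely outward boundary zero encloses no interior index and has vanishing normal local index, so $\ind_\nu(V')=\ind(V'_{\mathrm{int}})$ and the left-hand side collapses to $\ind(V'_{\mathrm{int}})$. Homotoping $V'$ slightly to point strictly outward on $\partial X$ (perturbing $A$ to be nonzero at the isolated boundary zeros, which does not change the count) and invoking the classical Poincar\'e--Hopf theorem, or Morse's formula with $\ind(\partial_-V')=0$, gives $\ind(V'_{\mathrm{int}})=\chi(X)$, completing the argument.

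The main obstacle is the local index bookkeeping in the invariance lemma, carried out so that all zeros stay isolated. The naive homotopy $V_s=(A,(1-s)B)$ that simply flattens the normal component is useless, since $V_1=(A,0)$ vanishes along the entire set $\{A=0\}$ and thus has non-isolated zeros; the normal pushes $V^{\pm}$ must instead be arranged to keep the perturbed zeros isolated while one tracks their indices by type. The subtlest points are verifying that the type-$0$ average $\tfrac12(I_{\mathrm{out}}+I_{\mathrm{in}})$ agrees with the intrinsic definition of $\ind_\nu(V,p)$ and is independent of the chosen perturbation, and matching the degree and orientation conventions across the equatorial $S^{n-2}$ so that the identity $I_{\mathrm{out}}-I_{\mathrm{in}}=\ind(\partial V,p)$ holds with the correct sign in all dimensions, including the half-integer cases when $n\le 2$. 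I expect this local analysis to be where the real work lies.
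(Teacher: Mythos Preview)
Your approach is correct but genuinely different from the paper's. The paper does not perturb $V$ at all; instead it forms the double $DX = \partial([-1,1]\times X)$ and extends $V$ to a vector field $DV$ on $DX$ by reflection on the two copies of $X$ and linear interpolation on the collar $[-1,1]\times\partial X$. The zeros of $DV$ are then classified (interior zeros of $V$ appear twice; a type-$0$ boundary zero gives an arc of zeros which, after a small perturbation, has local index $2\ind_\nu(V,p)$; type-$\pm$ zeros of $\partial V$ produce isolated zeros on $\{0\}\times\partial X$ of index $\mp\ind(\partial V,p)$). Applying the classical Poincar\'e--Hopf theorem to the closed manifolds $DX$ and $\partial X$ yields
\[
2\ind_\nu(V)+\ind(\partial_-V)-\ind(\partial_+V)=2\chi(X)-\chi(\partial X),\qquad
\ind(\partial_0V)+\ind(\partial_-V)+\ind(\partial_+V)=\chi(\partial X),
\]
and adding these gives the theorem. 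So the paper uses only Poincar\'e--Hopf for closed manifolds, whereas you reduce to Morse's boundary formula (the special case with no type-$0$ zeros) via local normal pushes.

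Two remarks on your write-up. First, what you flag as the subtle point---that $\ind_\nu(V,p)=\tfrac12(I_{\mathrm{out}}+I_{\mathrm{in}})$---is essentially the paper's \emph{definition}: with $e_1$ the inward normal one has $I_{\mathrm{out}}=i(v,a;e_1)$ and $I_{\mathrm{in}}=i(v,a;-e_1)$ directly, since after the push the flat face $D^{n-1}$ of the half-disk misses the relevant pole; the genuine degree computation is rather $I_{\mathrm{out}}-I_{\mathrm{in}}=\ind(\partial V,p)$, which is the standard ``difference of intersection numbers equals boundary winding'' identity. Second, your closing sentence about homotoping $V'$ to point strictly outward on $\partial X$ is not quite right: perturbing $A$ at the residual type-$+$ zeros does not make $\partial^\perp V'$ outward on the rest of $\partial X$. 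You should simply invoke Morse's formula with $\ind(\partial_-V')=0$, which you also mention, and drop the outward-pointing claim.
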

\begin{remarks} (1) The local index of a zero of the zero vector field
on a 0-dimensional manifold is always 1.  So, when $n=1$,
$\ind(\partial_0 V)$ is the number of the zeros on the boundary, and
$\ind(\partial_- V)$ is the number of boundary points at which the vector
points inward.\\
(2) The special case where the vectors $V(p)$ are tangent to the boundary
for all $p\in \partial X$ were discussed in \cite{MW};
see the review by J. M. Boardman in Mathematical Reviews.
\end{remarks}

When the zeros of $V$ are isolated and the zeros of $V$ on the boundary
are the only zeros of $\partial^\perp V(p)$, 
we will define the {\it tangential index} $\ind_\tau (V)$ of $V$ to be the sum
of the local indices of $V$ at the zeros in the interior and the tangential
local indices at the zeros on the boundary.
If the dimension of $X$ is bigger than 2, then the assumption on $V$
forces the connected components of the boundary of $X$ to be classified into the
following two types:
\begin{enumerate}
\item vectors point outward except at the isolated zeros,
\item vectors point inward except at the isolated zeros.
\end{enumerate}
The union of the components of the first type is denoted $\partial_+X$,
and the union of the components of the second type is denoted $\partial_-X$.
If the dimension of $X$ is 1, then the boundary components are single points;
so the vector at the boundary either points outward, inward, or is zero,
and accordingly the boundary $\partial X$ is split into $\partial_+X$,
$\partial_- X$, and $\partial_0X$.

\begin{theorem}
Suppose $X$ is an $n$-dimensional compact smooth manifold
and $V$ is a continuous tangent vector field on $X$.
If the zeros of $V$ are isolated and the zeros of $V$ on the boundary
are the only zeros of $\partial^\perp V(p)$, 
then the following equality holds:
\[
\ind_\tau(V)=
\begin{cases}
\chi(X) & \text{if $n$ is even},\\
\chi(X) - \chi(\partial_- X) & \text{if $n\ge 3$},\\
\chi(X)-\frac{1}{2}\chi(\partial_0 X) - \chi(\partial_- X) & \text{if $n=1$}.
\end{cases}
\]
\end{theorem}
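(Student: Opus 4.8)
The plan is to reduce the statement to the classical Poincar\'e--Hopf theorem for a vector field transverse to the boundary, by removing each boundary zero through a local perturbation. The hypothesis that the zeros of $\partial^\perp V$ on $\partial X$ are exactly the zeros of $V$ means precisely that, away from the finitely many zeros of $V$, the field is transverse to $\partial X$, pointing strictly outward or strictly inward. For $n\ge 3$ this already yields, as noted in the statement, the splitting $\partial X=\partial_+X\sqcup\partial_-X$ into components on which $V$ is uniformly outward or uniformly inward.

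The key local step is a perturbation lemma at a boundary zero $p$. Fix a small half-ball $B_p\subset X$ around $p$, with interior hemisphere $S_p=\partial B_p\cap\mathrm{int}\,X$ and flat part $F_p=\partial B_p\cap\partial X$. By its definition (given in the next section) as a degree of $V/|V|$ over $S_p$, the index $\ind_\tau(V,p)$ should satisfy the following: when $F_p\setminus\{p\}$ is connected and $V$ has a single normal direction along it---which is automatic for $n\ge 3$---one can replace $V$ inside $B_p$ by a field $V'$ agreeing with $V$ near $S_p$, transverse to $\partial X$ along all of $F_p$, and with isolated interior zeros in $B_p$ whose indices sum to $\ind_\tau(V,p)$. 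When $n\le 2$, the set $F_p\setminus\{p\}$ is disconnected, the two sides may carry opposite normal directions, and $\ind_\tau(V,p)$ is then the corresponding half-sphere degree, possibly a half-integer; a transverse replacement on all of $F_p$ need not exist, so this case is deferred.

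Assume first $n\ge 3$. Performing the replacement at every boundary zero produces a global field $V'$ on $X$ transverse to $\partial X$, with only isolated interior zeros and $\ind(V')=\ind_\tau(V)$, since interior zeros of $V$ keep their local indices while each boundary zero contributes exactly $\ind_\tau(V,p)$. As $\partial^\perp V'$ is nowhere zero, its sign is constant on each boundary component, so $V'$ points uniformly outward on $\partial_+X$ and inward on $\partial_-X$. The classical formula recalled in the introduction, coming from the Morse function of the pair $(X,\partial_-X)$, then gives $\ind(V')=\chi(X)-\chi(\partial_-X)$. For even $n$ this is $\chi(X)$, since $\partial_-X$ is an odd-dimensional closed manifold and hence $\chi(\partial_-X)=0$; for odd $n\ge 3$ it is $\chi(X)-\chi(\partial_-X)$, proving the first two cases.

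I expect the main difficulty to be the low-dimensional bookkeeping. For $n=2$ a boundary zero may separate an inward arc of $\partial X$ from an outward arc, so no transverse replacement exists on the flat patch and the perturbed field must retain a tangency; the half-integer values of $\ind_\tau(V,p)$ are designed precisely to compensate these normal-direction switches, and the delicate point is to verify that, assembled around each boundary circle, they combine with the interior indices to give the integer $\chi(X)$ (consistent with $\chi(\partial_-X)=0$ in this even case). For $n=1$ the boundary components are single points, each either inward, outward, or a zero; a type-$0$ endpoint carries a half-integer tangential index (for instance $\tfrac12$ for the model $V=t\,\partial_t$ on $[0,1]$), an inward endpoint is counted as in the classical case, and summing over the interval and circle components of $X$, using additivity of the index, yields $\chi(X)-\tfrac12\chi(\partial_0X)-\chi(\partial_-X)$. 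I would treat this final case by direct computation on the interval and circle models.
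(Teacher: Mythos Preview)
Your treatment of the case $n\ge 3$ is essentially the paper's own argument: push each boundary zero into the interior by a local modification so that the resulting field is transverse along all of $\partial X$, check that the new interior index equals the original tangential local index, and apply the classical formula $\ind(V')=\chi(X)-\chi(\partial_-X)$. The paper phrases the modification as ``add a small collar along the boundary component and extend $V$ over it,'' while you phrase it as ``perturb $V$ inside a half-ball $B_p$,'' but the content is the same.

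The genuine gap in your proposal is $n=2$. You correctly observe that a boundary zero can separate an outward arc from an inward arc, so the transverse-replacement trick is unavailable, and you say that the half-integer values of $\ind_\tau$ should compensate these direction switches around each boundary circle; but you give no mechanism for proving this. The paper does \emph{not} attempt to salvage the collar argument in dimension~$2$. Instead it covers all even $n$ (including $n=2$) by an independent construction: the \emph{twisted double}. On $DX=X_+\cup([-1,1]\times\partial X)\cup X_-$ one places $V$ on $X_+$ and $r_*(-V)$ on $X_-$ and interpolates linearly on the cylinder. Because the two halves differ by the antipodal map on fibres, the only zeros of $\tilde D V$ on the cylinder come from zeros of $V$ on $\partial X$, and for even $n$ each such segment of zeros perturbs to an isolated zero of index $2\ind_\tau(V,p)$; interior zeros of $V$ contribute $\ind(V,p)+(-1)^n\ind(V,p)$. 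Poincar\'e--Hopf on the closed manifold $DX$ then gives $2\ind_\tau(V)=\chi(DX)=2\chi(X)$. This global doubling is precisely what replaces the missing transverse extension when the boundary cannot be split into $\partial_+X$ and $\partial_-X$; without it (or an equivalent winding-number computation around each boundary circle), your $n=2$ case is not proved.

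For $n=1$ your direct computation on intervals would work, though the paper instead observes that $\ind_\tau=\ind_\nu$ in dimension~$1$ and invokes Theorem~1.
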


In the last section, we will give an alternative formulation of these theorems.

\section{Local Indices of an Isolated Zero on the Boundary}
In this section, we describe the two local indices of a vector field $V$
at an isolated zero on the boundary.  

Let $X$ be an $n$-dimensional compact smooth manifold with boundary $\partial X$.
We fix an embedding of $\partial X$ in a Euclidean space $\bbR^N$ of a sufficiently
high dimension so that, under the identification
$\bbR^N=1\times \bbR^N$, it extends to an an embedding of $(X,\partial X)$ in 
$([1,\infty)\times \bbR^{N}, 1\times \bbR^{N})$ such that
$X\cap [1,2]\times \bbR^N=  [1,2]\times\partial X $.

Now suppose $p$ is an isolated zero sitting on the boundary $\partial X$.
Let us take local cordinates $y_1$, $y_2$, \dots, $y_n$ around $p$
such that $y_1$ is equal to the first coordinate of $[1,\infty)\times \bbR^N$
and $p$ corresponds to $a=(1,0,\dots,0)\in \bbR^n$.
$V$ defines a vector field $v$ on a neighborhood of $a$ in the subset $y_1\ge 1$.
Choose a sufficiently small positive number $\varepsilon$ so that
the right half $D^n_+(a;\varepsilon)$ of the disk of radius $\varepsilon$
with center at $a$ is contained in this neighborhood,
and $a$ is the only zero of $v$ in $D^n_+(a;\varepsilon)$.
Let $H^{n-1}_+(a;\varepsilon)$ ($\subset\partial D^n_+(a;\varepsilon)$)
denote the right hemisphere of radius
$\varepsilon$ with center at $a$.  The vector field $v$ induces a continuous map
$\bar v:H^{n-1}_+(a;\varepsilon)\to S^{n-1}$ to the $(n-1)$-dimensional 
unit sphere by:
\[
\bar v(x)=\frac{v(x)}{\Vert v(x)\Vert}~.
\]
Let $S^{n-2}(a;\varepsilon)$ denote the boundary sphere 
of $H^{n-1}_+(a;\varepsilon)$.
When $n=1$, we understand that it is an empty set.
Assume that its image by $\bar v$ is not the whole sphere $S^{n-1}$.
Pick up a ``direction'' $d\in S^{n-1}\setminus \bar v (S^{n-2}(a;\varepsilon))$,
then $\bar v$ determines an integer, denoted $i(v,a;d)$, in 
$H_{n-1}(S^{n-1}, S^{n-1}\setminus\{d\})=\bbZ$.  
Here we use the compatible orientations for $H^{n-1}_+(a;\varepsilon)$
and $S^{n-1}$. 
It is the algebraic intersection number of $\bar v$ with $\{d\}\subset S^{n-1}$,
 and is locally constant as a function of $d$.
A pair of antipodal points $\{d,-d\}$ of $S^{n-1}$ is said to be
{\it admissible} if they are both in 
$S^{n-1}\setminus \bar v (S^{n-2}(a;\varepsilon))$.
For such an admissible pair $\{\pm d\}$, 
we define a possibly-half-integer $i(v,a;\pm d)$ to be the average
of the two integers $i(v,a;d)$ and $i(v,a;-d)$:
\[
i(v,a;\pm d)=\frac{1}{2} i(v,a;d) + \frac{1}{2}i(v,a;-d)~.
\]
In the case of $n=1$, there is only one admissible pair $\{\pm 1\}=S^0$,
and 
\[
i(v,1;\pm 1)=
\begin{cases}
\frac{1}{2} & \text{if $\bar v(1+\varepsilon)=1$,}\\
-\frac{1}{2} & \text{if $\bar v(1+\varepsilon)=-1$} .
\end{cases}
\]
\begin{definition}
Suppose $p$ is an isolated zero of $\partial V$. 
We may assume that $\varepsilon$ is sufficiently small,
and that the pair $\{\pm e_1\}$ with $e_1=(1,0,\dots,0)\in S^{n-1}$ is admissible.
The {\it normal local index} $\ind_\nu(V,p)$ of $V$ at $p$
is defined to be $i(v,a;\pm e_1)$.
\end{definition}

\begin{definition}
Suppose $p$ is an isolated zero of $\partial^\perp V$.
We define the {\it tangential local index} $\ind_\tau(V,p)$ of $V$ at $p$
as follows:  If $n=1$, then $\ind_\tau(V,p)=i(v,1;\pm 1)$.
If $n\ge 2$, then set $S^{n-2}=\{e\in S^{n-1}|e\perp (1,0,\dots,0)\}$.
We may assume that $\varepsilon$ is sufficiently small,
and that, $S^{n-2}\subset S^{n-1}\setminus \bar v (S^{n-2}(a;\varepsilon))$.
When $n=2$, there is only one admissible pair in $S^{n-2}=S^0$.
When $n\ge 3$, the value of $i(v,a;d)$ is independent of the choice
of $d\in S^{n-2}$, and $i(v,a;\pm d)=i(v,a;d)$.
So, for $n\ge 2$, we define $\ind_\tau(V,p)$ to be $i(v,a;\pm d)$,
where $d$ is any point in $S^{n-2}$.
\end{definition}

\begin{remarks} (1) When $n=1$, the two indices are the same.\\
(2) When $n\ge 3$, $\ind_\tau(V,p)$ is an integer.
\end{remarks}

\section{Proof of Theorem 1}
We give a proof of Theorem 1.  
Assume that $(X,\partial X)$ is embedded in
$([1,\infty)\times \bbR^{N}, 1\times \bbR^{N})$ as in the previous section.
We consider the double $DX$ of $X$:
\[
DX=\partial ([-1,1]\times X)=\{\pm 1\}\times X \cup [-1,1]\times \partial X~.
\]
$DX$ can be embedded in $\bbR\times\bbR^N$ as the union of
three subsets $X_+$, $X_-$, $[-1,1]\times\partial X$, where
$X_+$ is $X$ itself, $X_-$ is the image of the reflection $r:\bbR\times
\bbR^N\to \bbR\times \bbR^N$ with respect to $0\times\bbR^N$, 
and $\partial X\subset 1\times \bbR^N$
is regarded as a subset of $\bbR^N$.

Let $V=V_+$ be the given tangent vector field on $X=X_+$.
The reflection $r$ induces a tangent vector field $r_*(V)=V_-$ on $X_-$.
We can extend these to obtain  a tangent vector field $DV$ on $DX$ by 
defining $DV(t,x)$ to be
\[
\frac{t+1}{2}V_+(1,x) +\frac{1-t}{2}V_-(-1,x)
\]
for $(t,x)\in [-1,1]\times \partial X$.
Note that, on $0\times \partial X$, we obtain the boundary $\partial V$ of $V$.
There are four kinds of zeros of $DV$:
\begin{enumerate}
\item For each zero $p$ of $V$ in the interior of $X$, there are two zeros: 
the copy in the interior of $X_+$ and the copy in the interior of $X_-$.
They have the same local index as the original one.
\item For each zero $p=(1,x)$ of $\partial V$ of type 0, 
the points $(t,x)$ are all zeros of $DV$.  Although these are not isolated,
we can perturb the vector field in a very small neighborhood and make it
into an isolated zero, whose local index is $2\ind_\nu(V,p)$.
\item For each zero $p=(1,x)\in\partial X$ of $\partial V$ of type $-$,
the point $(0,x)$ is an isolated zero of $DV$ whose local index is 
equal to $\ind(\partial V,p)$.
\item For each zero $p=(1,x)\in\partial X$ of $\partial V$ of type $+$,
the point $(0,x)$ is an isolated zero of $DV$ whose local index is 
equal to $-\ind(\partial V,p)$.
\end{enumerate}
One can verify the computation of the local indices in cases (2), (3), and (4)
above as follows: First define the local coordinates $y_1$, \dots, $y_n$ 
around $(0,x)$ extending the $y_i$'s around $p=(1,x)$ described in \S2 by 
\[
\begin{cases}
y_1(t,*)=t~ & \text{for all $t\leq 1$} \\ 
y_i(t,x')=y_i(1,x') & \text{if  $i=2,\dots, n$ and $-1\leq t\leq 1$},\\
y_i(t, x'')=y_i(-t, x'') & \text{ if $i=2,\dots, n$ and $t\leq -1$ }.
\end{cases}
\]
Then consider the map 
\[
D\bar v: r(H_+^{n-1}(a;\varepsilon))\cup [-1,1]\times S^{n-2}(a;\varepsilon)\cup
H^{n-1}_+(a;\varepsilon)\to S^{n-1}
\]
induced from $DV$,
and compute the algebraic intersection number with $e_1=(1,0,\dots,0)$
in case (2) and with $e_2=(0,1,0,\dots, 0)$ in cases (3) and (4).
Note that (3) and (4) do not occur when $n=1$. 
Let $\bar v:H^{n-1}_+(a;\varepsilon)\to S^{n-1}$ be the map
induced by $V$ as in \S2. 
Note that $\bar v$ can be defined not only for an isolated zero of 
$\partial V$ of type 0 but also for a zero of type $\pm 1$.  
$D\bar v$ is the double of $\bar v$ in the sense that it is 
$\bar v$ on the subset $H^{n-1}_+(a;\varepsilon)$ and that it is 
the composite $r\circ \bar v \circ r$ on the subset
$r(H^{n-1}_+(a;\varepsilon))$; therefore, for $q\in r(H^{n-1}_+(a;\varepsilon))$, 
$D \bar v(q)=e_1$ if and only if $\bar v(r(q))=-e_1$.
In case (2), the vectors on the subset $[-1,1]\times S^{n-2}(a;\varepsilon)$
and $e_1$ are never parallel; 
so the algebraic intersection of $D\bar v$ with $e_1$ is
$i(v,a;e_1)+i(v,a;-e_1)=2\ind_\nu(V,p)$.  In case (3) ({\it resp.} (4)), 
we may assume that all the vectors $D\bar v((t,x'))$ ($t\ne 0$) point away from
({\it resp.} toward)
the hyperplane $y_1=0$; therefore, the local index is equal to
$\ind(\partial V, p)$ ({\it resp.} $-\ind(\partial V, p)$), 
since the $y_1$ direction is preserved ({\it resp.} reversed) in case (3)
({\it resp.} (4)).

Apply the Poincar\'e-Hopf index theorem to $DV$ and $\partial V$;
we obtain the following equalities:
\begin{align*}
2 \ind_\nu(V) + \ind(\partial_-V) -\ind(\partial_+V) &= 2\chi(X) -\chi(\partial X)~,\\
\ind(\partial_0 V)+ \ind(\partial_- V)+ \ind(\partial_+ V)&=\chi(\partial V)~.
\end{align*}
The desired formula follows immediately from these.

\section{Proof of Theorem 2}
When $n=1$, the normal local index and the tangential local index
are the same; therefore, the $n=1$ case follows from Theorem 1.
So we assume that $n\ge 2$.

Let $DX$ be the double of $X$ and let us use the same notation as
in the first paragraph of the previous section.
We will define the twisted double $\tilde DV$ of the vector field $V$ on $X$
as follows:
$\tilde V_+=V$ is a vector field on $X=X_+$.  
Consider $-V$; the reflection $r$ induces
a vector field $\tilde V_-=v_*(-V)$ on $X_-$.  
Extend these to obtain a tangent vector field $\tilde DV$ on $DX$ by 
defining $\tilde DV(t,x)$ to be
\[
\frac{t+1}{2}\tilde V_+(1,x) +\frac{1-t}{2}\tilde V_-(-1,x)
\]
for $(t,x)\in [-1,1]\times \partial X$.
In general, if $V(p)$ is tangent to $\partial X$ at $p=(1,x)\in \partial X$,
then the twisted double $\tilde DV$ has a corresponding zero $(0,x)$.
We are assuming that this happens only when $p$ is a zero of $V$.
Thus there are only two types of zeros of $\tilde DV$:
\begin{enumerate}
\item For each zero $p$ of $V$ in the interior of $X$, there are two zeros: 
the copy in the interior of $X_+$ which has the same local index as $\ind(V,p)$
and the copy in the interior of $X_-$ whose local index is equal to
$(-1)^n\ind(V,p)$.
\item For each zero $p=(1,x)$ of $V$ on the boundary of $X$, 
the points $(t,x)$ are all zeros of $\tilde DV$.  
Although these are not isolated, 
we can perturb the vector field in a very small neighborhood and make it
into an isolated zero, whose local index is equal to $2\ind_\tau(V,p)$ 
if $n$ is even  and is equal to 0 if $n$ is odd.
\end{enumerate}
The computation of the local index in case (2) can be done in the following way.
Let us use the notation in the previous section.  In this case
we consider
\[
\tilde D\bar v: r(H_+^{n-1}(a;\varepsilon))\cup [-1,1]\times S^{n-2}(a;\varepsilon)\cup
H^{n-1}_+(a;\varepsilon)\to S^{n-1}
\]
induced from $\tilde DV$,
and compute the algebraic intersection number with $e_2=(0,1,0,\dots, 0)$.
$\tilde D\bar v$ is the twisted double of $\bar v$ in the sense that it is 
$\bar v$ on the subset $H^{n-1}_+(a;\varepsilon)$ and that it is 
the composite $r\circ A\circ \bar v \circ r$ on the subset
$r(H^{n-1}_+(a;\varepsilon))$, where $A:S^{n-1}\to S^{n-1}$
is the antipodal map; therefore, for $q\in r(H^{n-1}_+(a;\varepsilon))$, 
$\tilde D \bar v(q)=e_2$ if and only if $\bar v(r(q))=-e_2$.
The vectors on the subset $[-1,1]\times S^{n-2}(a;\varepsilon)$
and $e_2$ are never parallel; 
so the algebraic intersection of $\tilde D\bar v$ with $e_2$ is
$i(v,a;e_1)+(-1)^n i(v,a;-e_1)$ which is equal to
$2\ind_\tau(V,p)$ if $n$ is even and is equal to 0 if $n$ is odd.

So, if $n$ is even, the Poincar\'e-Hopf formula for $\tilde DV$ reduces to
the desired formula $\ind_\tau V=\chi(X)$.

Next we consider the case where $n\ge 3$.
As we mentioned in the first section, the components of $\partial X$
are classified into two types:
\begin{enumerate}
\item vectors point outward except at the isolated zeros,
\item vectors point inward except at the isolated zeros.
\end{enumerate}
Suppose that $p$ is an isolated zero of $V$ on a connected component $C$
of $\partial X$ and that $C$ is of the first type. Consider a small neighborhood
of $p$ and coordinates $\{y_1, \dots, y_n\}$ as in \S2.
The vector field $v$ along $y_1=1$ can be thought of as a map 
$\varphi(y_2,\dots,y_n)=(z_1,z_2,\dots,z_n)$
from an open set $U\subset \bbR^{n-1}$ to $\bbR^n$
satisfying $z_1\ge 0$.  
The equality holds if and only if $(y_2,\dots,y_n)=(0,\dots,0)$.
Choose a very small number $\varepsilon>0$. Using a homotopy
\[
\max\{\varepsilon-(y_2^2+y_3^2+\dots+y_n^2),0\}(-t,0,\dots,0)+
\varphi(y_2,\dots,y_n)~,
\]
one can add a collar along $C$ and extend the vector field $V$
over the added collar.  Repeat this process
if there are more zeros on $C$ until the vector points outward along the
new boundary component.  The zeros on the boundary component $C$ now
lies in the interior, and the local indices are eaual to the corresponding
tangential local indices.
We can do a similar modification in the case of the second type
component, and move all the zeros on the boundary into the interior.
Now apply the Poincar\'e-Hopf theorem to get:
\[
\ind_\tau V =\chi(X)-\chi(\partial_-X)~.
\]
This completes the proof.

\section{An Alternative Formulation}
Let $V$ be a continuous vector field on an $n$-dimensional compact smooth 
manifold $X$ whose zeros are isolated.
In the previous sections, we considered the zeros of $V$ as the only singular
points, and defined the normal/tangential index as the sum of local indices
only at the zeros.  In this section, the zeros of $\partial V$ (in the normal index case)
and the zeros of $\partial^\perp V$ (in the tangential index case)
are also regarded as singular points of $V$.  
Note that the definition of the normal ({\it resp.} tangential) local index at an
isolated zero on the boundary given in \S2 is valid for 
an isolated zero of $\partial V$ ({\it resp.} $\partial^\perp V$).

\begin{definition}
When the zeros of $V$ and $\partial V$ are all isolated, 
the {\it expanded normal index} $\ind^*_\nu(V)$ of $V$ is defined to be
the sum of the local indices of $V$ at the interior zeros of $V$ and 
the normal local indices of $V$ at the zeros of $\partial V$.
When the zeros of $V$ and $\partial^\perp V$ are all isolated, 
the {\it expanded tangential index} $\ind^*_\tau(V)$ of $V$ is defined to be
the sum of the local indices of $V$ at the interior zeros of $V$ and 
the tangential local indices of $V$ at the zeros of $\partial^\perp V$.
\end{definition}

\begin{remark}
Note that the tangential local index  $\ind_\tau(V,p)$ at an isolated
zero $p$ of $\partial^\perp V$ is equal to zero if $n\ge 3$ and 
$p$ is not a zero of $V$; this can be observed by choosing $d\in S^{n-2}$
to be not equal to $\pm \bar v(p)$.
Also note that, if $n=1$, the zeros of $\partial^\perp V$ are 
automatically the zeros of $V$.
Therefore, $\ind^*_\tau(V)=\ind_\tau(V)$ if $n\neq 2$.
\end{remark}

\begin{theorem} 
Suppose $X$ is an $n$-dimensional compact smooth manifold
and $V$ is a continuous tangent vector field on $X$.
If $V$ and $\partial V$ have only isolated zeros, then the following equality holds:
\[
\ind^*_\nu(V)=\begin{cases}
\chi(X) & \text{if $n$ is even},\\
0 & \text{if $n$ is odd}.
\end{cases}
\]
\end{theorem}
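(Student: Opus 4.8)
The plan is to reduce everything to Theorem 1 by computing the normal local index of $V$ at those zeros of $\partial V$ that are \emph{not} zeros of $V$, i.e. at the zeros of type $+$ and type $-$. Once these are known, the expanded normal index differs from the ordinary normal index $\ind_\nu(V)$ only by the contributions of these extra boundary zeros, and the first displayed identity in the proof of Theorem 1,
\[
2\ind_\nu(V)+\ind(\partial_-V)-\ind(\partial_+V)=2\chi(X)-\chi(\partial X)~,
\]
together with the parity behaviour of $\chi(\partial X)$, will finish the argument.

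First I would compute $\ind_\nu(V,p)$ at a zero $p$ of type $-$. Here $V(p)$ is the inward normal $c\,e_1$ with $c>0$, so for small $\varepsilon$ the first coordinate of $V$ stays positive on $D^n_+(a;\varepsilon)$ and the image $\bar v(H^{n-1}_+(a;\varepsilon))$ lies in the open hemisphere $\{z\in S^{n-1}\mid z_1>0\}$. Consequently $-e_1$ is not in the image, $i(v,a;-e_1)=0$, and $\{\pm e_1\}$ is admissible because on the equator $S^{n-2}(a;\varepsilon)\subset\partial X$ the tangential part $\partial V$ is nonzero. Projecting the open hemisphere orthogonally onto the $(z_2,\dots,z_n)$-disc identifies $H_{n-1}(S^{n-1},S^{n-1}\setminus\{e_1\})$ with the relative homology of that disc, and under this identification $i(v,a;e_1)$ becomes the degree of the normalized tangential part of $V$ on $S^{n-2}(a;\varepsilon)$. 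Since that restriction is exactly $\partial V/\Vert\partial V\Vert$, the degree is $\ind(\partial V,p)$, whence
\[
\ind_\nu(V,p)=\tfrac12\, i(v,a;e_1)=\tfrac12\ind(\partial V,p)\qquad(\text{type }-)~.
\]

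For a zero of type $+$ I would apply the type $-$ computation to $-V$, which has a zero of type $-$ at the same point. Using $\overline{-v}=A\circ\bar v$ with $A$ the antipodal map ($\deg A=(-1)^n$) gives $\ind_\nu(-V,p)=(-1)^n\ind_\nu(V,p)$, while $\ind(\partial(-V),p)=(-1)^{n-1}\ind(\partial V,p)$; combining these with the type $-$ formula yields $\ind_\nu(V,p)=-\tfrac12\ind(\partial V,p)$ at a zero of type $+$. Summing over all boundary zeros of $\partial V$ therefore gives
\[
\ind^*_\nu(V)=\ind_\nu(V)+\tfrac12\ind(\partial_-V)-\tfrac12\ind(\partial_+V)~.
\]
Substituting the displayed identity from the proof of Theorem 1 cancels the two boundary terms and leaves $\ind^*_\nu(V)=\chi(X)-\tfrac12\chi(\partial X)$.

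Finally I would invoke the standard fact that a closed odd-dimensional manifold has vanishing Euler characteristic. When $n$ is even, $\partial X$ is closed of odd dimension $n-1$, so $\chi(\partial X)=0$ and $\ind^*_\nu(V)=\chi(X)$; when $n$ is odd, $DX$ is a closed odd-dimensional manifold, so $0=\chi(DX)=2\chi(X)-\chi(\partial X)$ forces $\chi(\partial X)=2\chi(X)$ and $\ind^*_\nu(V)=0$. The main obstacle is the orientation bookkeeping in the hemisphere computation: checking that the projection is orientation-preserving at $e_1$ (so that the sign of $i(v,a;e_1)$ is $+\ind(\partial V,p)$ rather than its negative), and that the admissibility of $\{\pm e_1\}$ holds for small $\varepsilon$. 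I note that these two signs are in any case pinned down by consistency, since the coefficients $+\tfrac12$ for type $-$ and $-\tfrac12$ for type $+$ are exactly those needed for the identity $\ind^*_\nu(V)=\chi(X)-\tfrac12\chi(\partial X)$ to hold identically in $V$ once Theorem 1 is granted.
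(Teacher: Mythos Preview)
Your argument is correct and arrives at the same identity $\ind^*_\nu(V)=\chi(X)-\tfrac12\chi(\partial X)$ that underlies the paper's one-line proof. The paper's intended argument is that the case~(2) computation in the proof of Theorem~1 --- the local index of $DV$ at the doubled zero equals $i(v,a;e_1)+i(v,a;-e_1)=2\ind_\nu(V,p)$ --- uses only that $\{\pm e_1\}$ is admissible, i.e.\ only that $p$ is an isolated zero of $\partial V$; hence it applies verbatim to type~$\pm$ zeros as well, giving $\chi(DX)=2\ind^*_\nu(V)$ directly, after which parity of $\chi(DX)=2\chi(X)-\chi(\partial X)$ finishes.

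Your route differs only in how you reach that identity: instead of invoking the case~(2) computation uniformly, you compute $\ind_\nu(V,p)=\pm\tfrac12\ind(\partial V,p)$ at type~$\mp$ zeros by a direct hemisphere argument (projecting onto the tangential disc and reading off $\ind(\partial V,p)$), and then feed these values into the first displayed identity of Theorem~1's proof. This is equivalent --- your local formulas are exactly what one gets by equating the case~(2) value $2\ind_\nu(V,p)$ with the case~(3)/(4) values $\pm\ind(\partial V,p)$ --- and has the side benefit of recording the local relation $\ind_\nu(V,p)=\pm\tfrac12\ind(\partial V,p)$ explicitly. The orientation check you flag is genuine but routine, and as you note the signs are in any case forced by consistency with Theorem~1.
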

\begin{proof}
Immediate from the proof of Theorem 2.
\end{proof}

\begin{theorem}
Suppose $X$ is an $n$-dimensional compact smooth manifold
and $V$ is a continuous tangent vector field on $X$.
If $V$ and $\partial^\perp V$ have only isolated zeros, 
then the following equality holds:
\[
\ind^*_\tau(V)=
\begin{cases}
\chi(X) & \text{if $n$ is even},\\
\chi(X) - \chi(\partial_- X) & \text{if $n\ge 3$},\\
\chi(X)-\frac{1}{2}\chi(\partial_0 X) - \chi(\partial_- X) & \text{if $n=1$}.
\end{cases}
\]
\end{theorem}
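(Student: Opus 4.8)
The plan is to re-run the twisted-double construction from the proof of Theorem 2 almost verbatim, noting that it never actually used the hypothesis that every tangency point of $V$ with $\partial X$ is a zero of $V$, and then to handle the three dimensional regimes exactly as in that proof. For $n$ even I would form the twisted double $\tilde DV$ on $DX$ as in \S4. The computation there shows that a point $p=(1,x)$ gives rise to a zero $(0,x)$ of $\tilde DV$ precisely when $V(p)$ is tangent to $\partial X$, i.e.\ precisely at the zeros of $\partial^\perp V$, and that such a zero has local index $2\ind_\tau(V,p)$, while each interior zero doubles to total index $2\ind(V,p)$. None of this used that the tangency point is a zero of $V$, so it applies unchanged in the expanded setting. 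Poincar\'e--Hopf then gives $2\ind^*_\tau(V)=\chi(DX)=2\chi(X)-\chi(\partial X)$, and since $n-1$ is odd we have $\chi(\partial X)=0$, so $\ind^*_\tau(V)=\chi(X)$. The case $n=1$ is immediate as well: a tangency point on the $0$-dimensional boundary is forced to be a zero of $V$, so $\ind^*_\tau(V)=\ind_\tau(V)$ and the formula is the $n=1$ case of Theorem 2.

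The case $n\ge 3$ odd is where the expanded hypothesis genuinely relaxes Theorem 2, and this is where the work lies. The first step is to check that $\partial_+X$ and $\partial_-X$ are still well defined. The signed normal component of $V$ is a continuous real-valued function on each component $C$ of $\partial X$, and its zeros are exactly the (discrete) intersection of $C$ with the zero set of $\partial^\perp V$; since $\dim C=n-1\ge 2$, the complement of a discrete set in the connected manifold $C$ is again connected, so this function cannot take both signs, and $C$ is cleanly of outward or of inward type away from its isolated zeros. By the Remark, each \emph{extra} tangency point (a zero of $\partial^\perp V$ that is not a zero of $V$) has $\ind_\tau(V,p)=0$. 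I would then apply the collar-pushing homotopy of \S4 on each component, pushing $V$ to point strictly outward on the components of $\partial_+X$ and strictly inward on those of $\partial_-X$: the boundary zeros of $V$ migrate into the interior carrying exactly their tangential local indices, while each extra tangency point becomes a transverse point inside the added collar and contributes no zero, consistent with its vanishing index. This leaves $\ind^*_\tau(V)$ and the types of the components unchanged and realizes the hypothesis of Theorem 2, so Poincar\'e--Hopf for the modified field gives $\ind^*_\tau(V)=\chi(X)-\chi(\partial_-X)$.

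The main obstacle is entirely in the $n\ge 3$ odd case, and it splits into the two points just described: first, verifying that isolated zeros of $\partial^\perp V$ cannot separate a boundary component into outward and inward regions, so that $\partial_-X$ is meaningful at all (this is exactly the place where $n\ge 3$, hence $\dim C\ge 2$, is used, and it is why $n=2$ is absent from the third branch of the formula); and second, checking that the collar modification absorbs the extra tangency points without introducing new zeros. The latter uses that $V(p)\neq 0$ at such a point, so adding a small outward (resp.\ inward) normal component keeps $V$ nonvanishing throughout the collar, leaving the index bookkeeping intact.
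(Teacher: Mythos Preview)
Your proposal is correct and follows essentially the same route as the paper's proof: the $n=1$ case is dismissed as vacuous, the even-$n$ case reruns the twisted-double computation to get $2\ind^*_\tau(V)=\chi(DX)=2\chi(X)$, and the $n\ge 3$ case repeats the collar-pushing argument of Theorem~2 while treating all zeros of $\partial^\perp V$ (not just zeros of $V$) as points to be absorbed into the interior. Your write-up is actually more explicit than the paper's terse instruction to ``follow the proof of Theorem~2, treating the zeros of $\partial^\perp V$ like the zeros of $V$ on the boundary'': you spell out the connectedness argument showing $\partial_\pm X$ remain well defined when $\dim\partial X\ge 2$, and you note that the extra tangency points have $\ind_\tau(V,p)=0$ (the paper records this in the Remark preceding Theorem~3) so they drop out of the bookkeeping once pushed inside.
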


\begin{proof}
The only difference between Theorem 2 and Theorem 4 is the existence of the
isolated zeros of $\partial^\perp V$ that are not the zeros of $V$. 
Since there is nothing to prove when $n=1$, we assume that $n>1$.

Suppose $n$ is even. 
There are three types of zeros of $\tilde D V$, not two; 
the third type is an isolated zero $(0,x)$ corresponding to $p=(1,x)$
such that $V(p)$ is a non-zero tangent vector of $\partial X$ as mentioned above.
The local index of $\tilde D V$ is $2\ind^*_\tau(V,p)$.
Therefore the Poincar\'e-Hopf formula for $\tilde D V$ gives
$2\ind^*_\tau(V)=2\chi(X)$.

Next suppose $n\ge 3$.
Follow the proof of Theorem 2, treating the zeros of $\partial^\perp V$
like the zeros of $V$ on the boundary, and apply the Poincar\'e-Hopf theorem.
\end{proof}

\bibliographystyle{amsplain}

\begin{thebibliography}{9}
\bibitem{BG}
{J. C. Becker and D. H. Gottlieb},
{Vector fields and transfers},
{\em Manuscripta Math.} {\bf 72}(1991), no.2, 111-130;
MR1114000 (92k:55025).

\bibitem{GS}
{D. H. Gottlieb and G. Samaranayake},
{The index of discontinuous vector fields},
{\em New York J. Math.} {\bf 1}(1994/95), 130--148;
MR1341518 (96m:57035).

\bibitem{MW}
{T. Ma and S. Wang},
{A generalized Poincar\'e-Hopf index formula and its applications
to 2-D incompressible flows},
{\em Nonlinear Anal. Real World Appl.} {\bf 2} (2001), no.4, 467--482;
MR1858900 (2003b:57040).

\bibitem{Morse}
{M. Morse},
{Singular points of vector fields under general boundary conditions},
{\em Amer. J. Math.} {\bf 51} (1929), 165--178.

\bibitem{Pugh}
{C. C. Pugh},
{A generalized Poincar\'e index formula},
{\em Topology} {\bf 7} (1968), 217--226;
MR0229254 (37 \#4828).
\end{thebibliography}

\end{document}